\documentclass[a4paper]{amsart}

% preamble.tex

\usepackage[T1]{fontenc}
\usepackage[utf8]{inputenc}
\usepackage[english]{babel}
\usepackage{lmodern}
\usepackage{exscale}
\usepackage[babel]{microtype}
\usepackage{amsmath, amssymb, mathtools, mathrsfs,mathabx,epsfig}
\usepackage{enumerate}
\usepackage{scalerel,stackengine}
\usepackage{color}

\usepackage{csquotes}
\usepackage[backend=biber,style=alphabetic, sorting=nyt, maxnames=10, maxalphanames=5]{biblatex}
\addbibresource{biblio.bib}

\usepackage{tikz}
\usetikzlibrary{matrix,shapes,arrows,calc,3d,decorations,decorations.pathmorphing,through,cd}
%\usepackage{todonotes}

% /!\ order is important!
% must be last in the list of packages
\usepackage[numbered]{bookmark}
\usepackage{hyperref}
\usepackage{amsthm}
\hypersetup{
  colorlinks,
  urlcolor = {blue!80!black},
  linkcolor = {red!70!black},
  citecolor = {green!50!black}
}

\theoremstyle{plain}
  \newtheorem{thm}{Theorem}
  \newtheorem{theorem}[thm]{Theorem}
  \newtheorem{definition}[thm]{Definition}

  \newtheorem{proposition}[thm]{Proposition}

  \newtheorem{lemma}[thm]{Lemma}
\theoremstyle{definition}
  
  \newtheorem{remark}[thm]{Remark}

%%% Local Variables:
%%% mode: latex
%%% TeX-master: "framed-conf.v2"
%%% End:

% math.tex

\tikzset{ext/.style={circle, draw,inner sep=1pt},int/.style={circle,draw,fill,inner sep=1pt},nil/.style={inner sep=1pt}}
\tikzset{exte/.style={circle, draw,inner sep=3pt},inte/.style={circle,draw,fill,inner sep=3pt}}
\tikzset{diagram/.style={matrix of math nodes, row sep=3em, column sep=2.5em, text height=1.5ex, text depth=0.25ex}}
\tikzset{diagram2/.style={matrix of math nodes, row sep=0.5em, column sep=0.5em, text height=1.5ex, text depth=0.25ex}}

\newcommand{\alg}[1]{\mathfrak{{#1}}}
 % commutator

 % equation reference

\newcommand{\ad}{{\text{ad}}}

\newcommand{\R}{{\mathbb{R}}}

 % Nerve
\newcommand{\Graphs}{{\mathsf{Graphs}}}

%\newcommand{\pdu}{{}^*} % predual
 % predual
 % predual
 % homotopy quotient

\newcommand{\Pois}{{\mathsf{Pois}}}

\newcommand{\op}{\mathcal}

\newcommand{\SO}{\mathrm{SO}}

\newcommand{\bpm}{\begin{pmatrix}}
\newcommand{\epm}{\end{pmatrix}}

\newcommand{\GC}{\mathrm{GC}}

\newcommand{\mU}{\mathcal{U}}

\DeclareMathOperator{\sgn}{sgn}

\DeclareMathOperator{\Hom}{Hom}
\DeclareMathOperator{\Emb}{Emb}

%\mathbb{\Delta}}

\newcommand{\beq}[1]{\begin{equation}\label{#1} 
}
\newcommand{\eeq}{\end{equation}}

   %for use in indices, exponents...

%\renewcommand{\texorpdfstring}[2]{{#1}}

%%% Local Variables:
%%% mode: latex
%%% TeX-master: "framed-conf.v2"
%%% End:

\usepackage{fullpage}

\begin{document}

\title{A(nother) model for the framed little disks operad}

\author[E. Lindell]{Erik~Lindell}
\address{Erik~Lindell: University of Stockholm, Matematik, 106 91 Stockholm, Sweden}
\email{lindell@math.su.se}
\author[T. Willwacher]{Thomas~Willwacher}
\address{Thomas~Willwacher: Department of Mathematics, ETH Zurich, Zurich, Switzerland}
\email{thomas.willwacher@math.ethz.ch}
\thanks{This work was partially supported by the European Research Council Starting Grant GRAPHCPX}
\subjclass[2000]{16E45; 53D55; 53C15; 18G55}
\keywords{}
\maketitle

\begin{abstract}
We describe new graphical models of the framed little disks operads which exhibit large symmetry dg Lie algebras.
\end{abstract}

%\tableofcontents

\newcommand{\GT}{\mathrm{GT}}
\newcommand{\GRT}{\mathrm{GRT}}
\newcommand{\Gal}{\mathrm{Gal}}
\newcommand{\lD}{\mathrm{D}}
\newcommand{\Aut}{\mathrm{Aut}}
\newcommand{\Der}{\mathrm{Der}}
\newcommand{\Chains}{\mathrm{Chains}}
\newcommand{\K}{\mathbb K}
\newcommand{\FreeOp}{\mathrm{FreeOp}}

\newcommand{\rell}{\mathfrak{r}^{ell}}
\newcommand{\frakt}{\mathfrak{t}}
\newcommand{\barfrakt}{\overline{\mathfrak{t}}}
\newcommand{\grtell}{\alg{grt}^{ell}}
\newcommand{\grt}{\alg{grt}}
\newcommand{\conf}{\mathrm{conf}}
\newcommand{\Diff}{\mathrm{Diff}}
\newcommand{\BiDer}{\mathrm{BiDer}}

\newcommand{\icg}{\mathsf{ICG}}
\newcommand{\flD}{\mathsf{D}^{fr}}
\newcommand{\tGC}{\mathsf{tGC}}
\newcommand{\osp}{\mathfrak{osp}}

\newcommand{\mflD}{\mathsf{mD}^{fr}}
\newcommand{\mfM}{\mathsf{mM}^{fr}}
\newcommand{\vspan}{\mathrm{span}}

\newcommand{\fG}{\mathsf{fG}}
\newcommand{\actson}{\circlearrowright}
\newcommand{\LG}{\mathsf{LG}}
\newcommand{\LLG}{\mathsf{LLG}}
\newcommand{\hLG}{\widehat{\mathsf{LG}}}

\newcommand{\LGraphs}{\mathsf{LGraphs}}
\newcommand{\oGC}{\mathsf{oGC}}

\newcommand{\GG}{\mathsf{GG}}
\newcommand{\G}{\mathrm{G}}

\newcommand{\hag}{\hat{\alg g}}

\def\acts{\mathrel{\reflectbox{$\righttoleftarrow$}}}

%%%%additional package for tikz
\usetikzlibrary{decorations.markings}

\tikzset{->-/.style={decoration={
  markings,
  mark=at position .5 with {\arrow{>}}},postaction={decorate}}}

%\usetikzlibrary{decorations.pathreplacing,calc}

\section{Introduction}
\label{sec:introduction}

The framed little $n$-disks operad $\flD_n$ is the operad of embeddings (by rotation and scaling) of $n$-dimensional unit disks in a unit disk.
It plays an important role in algebraic topology. For example it appears prominently in the Goodwillie-Weiss embedding calculus \cite{GW1999b,BoavidaWeiss2013}, and in factorization homology \cite{AF2015}.
The real homotopy type of the framed little disks operad was computed for $n=2$ by Giansiracusa-Salvatore \cite{GiansiracusaSalvatore2010} and \v Severa \cite{Severa2010}, and for $n\geq 3$ by the second author and Khoroshkin \cite{KW}.
Recall that a Hopf cooperad is a cooperad in the category of differential graded commutative algebras.
A real model for the framed little disks operad is then a Hopf cooperad which is quasi-isomorphic to the (homotopy) Hopf cooperad of differential forms $\Omega(\flD_n)$.
For example, for even $n$ a small such a model is provided by the cohomology $H(\flD_n)$, using the formality of the operad.
Alternative combinatorially defined models are provided in \cite{GiansiracusaSalvatore2010,Severa2010,KW}.

The motivation behind the present paper is the realization that the framed little disks operad (over $\R$) has a large group of homotopy automorphisms, which are not visible as automorphisms in the existing (Hopf cooperad) models.\footnote{From \cite{KW} one can also extract homotopy Hopf cooperad models which support all symmetries found here, but these models are homotopy Hopf cooperads, not honest Hopf cooperads, and their strictification is relatively unwieldy. }
More concretely, let $\GC_n$ be the Kontsevich graph complex, which is a dg Lie algebra. It carries a grading by loop order, and hence one can extend it to a Lie algebra $\R L\ltimes \GC_n$ where $L$ denotes the generator of the loop order grading.
Then consider the dg Lie algebra
\[
 \alg a_n' := \Der(H(B\SO(n)) \ltimes \left(H(B\SO(n)) \hat \otimes (\R L\ltimes \GC_n) \right).
\]
It contains a Maurer-Cartan element $m\in H(B\SO(n))\hat \otimes \GC_n$ describing the homotopy type of the action of $\SO(n)$ on the unframed little disks operad, see \cite[Theorems 1.1,1.3]{KW} or section \ref{sec:KWmodel} below. We define the dg Lie algebra 
\[
 \alg a_n := (\alg a_n')^m
\]
as the twist of $\alg a_n'$.

The main purpose of this note is to introduce a dg Hopf cooperad $\LG_n$ built from suitably decorated ``Feynman'' diagrams, which is a model for the framed little disks operad and carries an action of the dg Lie algebra $\alg a_n$, compatible with the Hopf cooperad structure.
It will be shown in the forthcoming work \cite{BW} that $\alg a_n$ in fact may be identified with the homotopy (bi)derivations of the (real model of the) framed little disks operad.
Hence, being slightly imprecise, the main result of the present work is to describe a model for the framed little disks operad in which all of its (infinitesimal) homotopy automorphisms can be realized as (infinitesimal) Hopf cooperad automorphisms.

\subsection*{Acknowledgements}
This paper is based on the MSc thesis of the first author \cite{ErikMSc}, written at ETH Zurich under joint supervision of the second author and Alexander Berglund.
Both authors thank Alexander Berglund for his support.

\section{Preliminaries}
\subsection{Notation and conventions}

In this paper we will consistently be working over the reals, so for example cohomology will always be implicitly taken with $\mathbb{R}$-coefficients, and vector spaces are $\R$-vector spaces etc. 
For a set $S$ we denote the vector space generated by $S$ by $\R S$. As usual we use square square brackets for the graded commutative algebra of polynomials in some specified set of generators, e.g., $\R[x,y,z]$.
%We will also only consider real homotopy, i.e. $\pi^{\mathbb{R}}(G):=\pi(G)\otimes\mathbb{R}$.
We abbreviate the phrase ``differential graded'' by dg as usual. Throughout the paper we shall use cohomological conventions unless otherwise stated, so differentials always have degree $+1$. We will denote the cohomology of a dg vector space or a topological space by $H(\dots)$. The homology of a topological space is denoted by $H_\bullet(\dots)$.
We use the notation $V[-n]$ for the vector space where we raise the degree of every element by $n$. This is explicitly constructed as $V[-n]= \mathbb{R}s\otimes V$, where $s$ is some generator of degree $+n$. If we have some element $x$ in a graded vector space $V$, we will therefore write $sx$ for the corresponding element in $V[-n]$, when it is important to distinguish the two. In the same setting, if $x$ denotes an element of $V[-n]$, we denote the corresponding element of $V$ by $s^{-1}x$. Otherwise, we will often abuse notation a bit and simply write $x$ for both elements.

We denote the universal enveloping algebra of a (dg) Lie algebra $\alg g$ by $\mU \alg g$, and dually, the universal enveloping coalgebra of a dg Lie coalgebra $\alg c$ by $\mU^c \alg c$.
For an augmented algebra $A$ with augmentation $\epsilon$ we shall denote the augmentation ideal (i.e., the kernel of the augmentation) by $\bar A$. We use the same notation $\bar C$ for the cokernel of the coaugmentation of a coaugmented coalgebra $C$.

\subsection{Operads and cooperads}

We freely use the language of operads and cooperads, referring the reader to \cite{Loday} for an introduction and definitions.
In particular, we are interested in operads in the symmetric monoidal category of dg cocommutative coalgebras, which we call Hopf operads.
Dually, we will consider cooperads in dg commutative algebras, which we call Hopf cooperads.

% In particular a symmetric sequence $\mathcal{P}$ in the category $\mathcal{C}$ is a sequence of objects
% $$(\mathcal{P}(0),\mathcal{P}(1),\ldots),$$
% indexed by the natural numbers, together with a right action by the symmetric group $S_r$ on each object $\mathcal{P}(r)$, which we for $\mu\in\mathcal{P}(r)$  and $\sigma\in S_r$ denote by $\mu^\sigma$. 

Let $\op T$ be an operad in the category of smooth manifolds. (One can in fact take any topological operad, but less us assume that the underlying spaces are manifolds for simplicity.)
Then one can consider the collection of dg commutative algebras of de Rham forms $\Omega(\op T)$.
The operadic composition then induces a (homotopy) Hopf cooperad structure on $\Omega(\op T)$, referring to \cite{KW} for the appropriate definition of homotopy Hopf cooperad.
We will say that a Hopf cooperad is a model for $\op T$, if it can be connected to $\Omega(\op T)$ by a zigzag of quasi-isomorphisms.

In this paper we are interested in models for the framed little $n$-disks operad. We will, however, not need to work with the topological framed little disks operad directly, but merely construct Hopf cooperads that can be shown to be quasi-isomorphic to the models for the framed little $n$-disks operad found in \cite{KW}.

Suppose that $\mathcal H$ is a cocommutative Hopf algebra acting on a Hopf operad $\op P$.
Then one defines the semi-direct product $\mathcal{P}\circ\mathcal{H}$ as the Hopf operad assembled from the spaces
$$(\mathcal{P}\circ\mathcal{H})(r)=\mathcal{P}(r)\otimes\mathcal{H}^{\otimes r},$$
with the composition defined by
$$(p;h_1,\ldots,h_r)\circ_j(q;k_1,\ldots,k_s):=(p\circ_j(h_j'\cdot q);h_1,\ldots,h_{j-1},h_j''\cdot k_1,\ldots h_j^{(s+1)}\cdot k_s,h_{j+1},\ldots,h_r).$$
Dually, for a commutative Hopf algebra $\mathcal H$ coacting on a Hopf cooperad $\op C$ the collection $\op{C}\circ\mathcal{H}$ naturally carries the structure of a Hopf cooperad.
More concretely, consider the cocomposition correspoding to the tree
\beq{equ:treeex}
T=
\begin{tikzpicture}[scale=.5,baseline=-1cm]
\draw node {} 
  child { 
    child { node {$1$} }
    child {  node{$\cdots$} }
    child {  
      child { node {$i$} }
      child { node{$\cdots$} }
      child { node {$j$} }
    }
    child {node{$\cdots$}}
    child { node{$r$} }
  };
\end{tikzpicture}
\eeq
If the cocomposition maps some $x\in \op C$ to $\Delta_T x = \sum x' \otimes x''$, and if the coaction sends $x''$ to $\sum h\otimes x'''\in H\otimes \op C$, then the cocomposition in the semidirect product is as follows:
\beq{equ:twistcocomp}
\Delta_T (x,h_1,\dots,h_r) \mapsto 
\sum \pm (x',h_1,\dots, h_i'\cdots h_j' h, \dots ,h_k)\otimes (x''',h_i'',\dots, h_j'').
\eeq

\subsection{The original little $n$-disks operads}

The little $n$-disks operad $\lD_n$ was introduced by Boardman, Vogt and May in the study of iterated loop spaces in the early 1970's. Let us recall its definition.
\begin{definition} We assemble $\lD_n$ from the subspaces
$$\lD_n(r)\subset\Emb\left(\underset{r}{\bigsqcup}D^n,D^n\right),$$
of embeddings of $r$ $n$-dimensional disks into the $n$-disk itself, where we assume that the embeddings are \textit{rectilinear}, i.e., obtained by scaling and translating the little disks. The identity is the identity embedding of the disk into itself, while the symmetric action is given by permuting the embeddings. The composition $(f_1,\ldots,f_r)\circ_j(g_1,\ldots,g_s)$ is given by composing with the $j$th embedding $f_j$
$$(f_1,\ldots,f_r)\circ_j(g_1,\ldots,g_s)=(f_1,\ldots,f_{j-1},f_j\circ g_1,\ldots,f_j\circ g_s,f_{j+1},\ldots,f_r).$$
\end{definition}

A typical element of $\mathcal{D}_2(3)$  can thus be illustrated like:
\[\begin{tikzpicture}[scale=1,
vert/.style={draw,outer sep=0,inner sep=0,minimum size=3,shape=circle,fill},
helper/.style={outer sep=0,inner sep=0,minimum size=3,shape=coordinate},
default edge/.style={draw},
 every loop/.style={out=140, in=50, looseness=.8, distance=.8cm }]

\draw (0,0) circle (1.5cm);
\draw (0.15,0.75) circle (0.3cm);
\draw (-0.6,-0.15) circle (0.6cm);
\draw (0.6,-0.4) circle (0.4cm);

\node (l1) at (0.15,0.75){1};
\node (l2) at (-0.6,-0.15){2};
\node (l3) at (0.6,-0.4){3};
\end{tikzpicture}\]

The symmetric action may then be viewed as "permuting the labels" on the disks. The $j$th partial composition can be illustrated by insertion into the $j$th disk. Once again, it is easiest to illustrate this with an example in the 2-dimensional case: 

\[\begin{tikzpicture}[scale=1,
vert/.style={draw,outer sep=0,inner sep=0,minimum size=3,shape=circle,fill},
helper/.style={outer sep=0,inner sep=0,minimum size=3,shape=coordinate},
default edge/.style={draw},
 every loop/.style={out=140, in=50, looseness=.8, distance=.8cm }]

\draw (0,0) circle (1.5cm);
\draw (0.15,0.75) circle (0.3cm);
\draw (-0.6,-0.15) circle (0.6cm);
\draw (0.6,-0.4) circle (0.4cm);

\node (l1) at (0.15,0.75){1};
\node (l2) at (-0.6,-0.15){2};
\node (l3) at (0.6,-0.4){3};
\node (l4) at (2,0){$\circ_2$};

\draw (4,0) circle (1.5cm);
\draw (4.4,0.4) circle (0.65cm);
\draw (3.5,-0.5) circle (0.55cm);

\node (l5) at (4.4,0.4){1};
\node (l6) at (3.5,-0.5){2};
\node (l7) at (6,0){$=$};

\draw (8,0) circle (1.5cm);
\draw (8.15,0.75) circle (0.3cm);
\draw (7.4,-0.15)[dashed] circle (0.6cm);
\draw (7.56,0.01) circle (0.26cm);
\draw (7.2,-0.35) circle (0.22cm);
\draw (8.6,-0.4) circle (0.4cm);

\node (l8) at (8.15,0.75){1};
\node (l9) at (7.56,0.01){2};
\node (l10) at (7.2,-0.35){3};
\node (l11) at (8.6,-0.4){4};

\end{tikzpicture}\]

The cohomology of the framed little disks operads has been computed by Arnold \cite{Arnold1969} and Cohen \cite{Cohen1976} (cf. also \cite{Sinha2013}, and shown to agree with the dual of the $n$-Poisson operad $\Pois_n$ for $n\geq 2$. 

\subsection{The framed little $n$-disks operad} If we loosen the criterion that the embeddings in the operad need to be rectilinear, and also permit rotations, we get the \textit{framed} little $n$-disks operad. We can thus represent an element by one from the original operad, together with an element from $SO(n)$ associated to each embedding: $(p;g_1,\ldots,g_r),$ where $p\in\lD_n(r)$ and $g_1,\ldots,g_r\in SO(n)$. For $n=2$ we can illustrate this with the example 
\\
\[\left(\begin{tikzpicture}[scale=1,baseline=-1mm,x=1cm,y=1cm]
\draw (-3,0) circle (1.5cm);
\draw (-2.5,-0.5) circle (0.5cm);
\draw (-3.4,0.5) circle (0.7cm);
\node (l1) at (-2.5,-0.5){1};
\node (l2) at (-3.4,0.5){2};
\node at (-0.6,0){$;R_{\varphi_1},R_{\varphi_2}$};
\end{tikzpicture}\right)
\begin{tikzpicture}[scale=1,baseline=-1mm]
\node at (1,0){$=$};
\draw (3,0) circle (1.5cm);
\draw (3.5,-0.5) circle (0.5cm);
\draw (2.6,0.5) circle (0.7cm);

\node at (3.5,-0.5){$\boldsymbol{\cdot}$};
\node at (3.5,-0.7){1};
\draw[dashed] (3.5,-0.5)--(4,-0.5);
\draw[dashed] (3.5,-0.5)--(3.03,-0.671);
\draw (3.65,-0.5) arc (0:200:0.15cm);
\node at (3.75,-0.35){\tiny{${\varphi_1}$}};

\node at (2.6,0.5){$\boldsymbol{\cdot}$};
\node at (2.45,0.4){2};
\draw[dashed] (2.6,0.5)--(3.3,0.5);
\draw[dashed] (2.6,0.5)--(3.05,1.036);
\draw (2.75,0.5) arc (0:50:0.15cm);
\node at (3,0.65){\tiny{${\varphi_2}$}};

\draw[fill=black,rotate around={200:(3.5,-0.5)}] (3.9,-0.52) rectangle (4.1,-0.48);
\draw[fill=black,rotate around={50:(2.6,0.5)}] (3.2,0.52) rectangle (3.4,0.48);

\end{tikzpicture}\]
\\
where $R_\varphi$ is the rotation matrix associated to the angle $\varphi$. The symmetric action and identity are the same as in $\lD_n$ (with the addition that we permute the associated rotations accordingly with the labels), but when composing disks, we also need to compose rotations. One way of defining this structure is by taking the semi-direct product with relation to the action by $SO(n)$ on $\lD_n$ given by rotating the centers of the embedded disks around the center of the big disk, but keeping the orientations of the little disks themselves fixed. Note that if these are rotated as well, the resulting element is not an element in the operad, since the embeddings are required to be rectilinear. In the case $n=2$, we for example have:
\[\begin{tikzpicture}
 \node at (-2,0){$R_\pi\cdot$} ;
 
\draw (0,0) circle (1.5cm);
\draw (0.15,0.75) circle (0.3cm);
\draw (-0.6,-0.15) circle (0.6cm);
\draw (0.6,-0.4) circle (0.4cm);

\node (l1) at (0.15,0.75){1};
\node (l2) at (-0.6,-0.15){2};
\node (l3) at (0.6,-0.4){3};

\node at (2,0){$=$};

\draw (4,0) circle (1.5cm);
\draw[rotate around={180:(4,0)}](4.15,0.75) circle (0.3cm);
\draw[rotate around={180:(4,0)}](3.4,-0.15) circle (0.6cm); \draw[rotate around={180:(4,0)}](4.6,-0.4) circle (0.4cm);

\node (l1) at (3.85,-0.75){1};
\node (l2) at (4.6,0.15){2};
\node (l3) at (3.4,0.4){3};

\end{tikzpicture}\]

The composition is then given by
$$(p;g_1,\ldots,g_r)\circ_j(q;h_1,\ldots,h_s)=(p\circ_j (g_j\cdot q);g_1,\ldots,g_{j-1}, g_jh_1,\ldots, g_jh_s,g_{j+1},\ldots,g_r),$$
just as desired. The symmetric action and identity are also the same as in $\lD_n$, so we define:

\begin{definition} The framed little $n$-disks operad is the semi-direct product
$$\lD_n^{fr}:=\lD_n\circ \SO(n),$$
with relation to the action by $\SO(n)$ on $\lD_n$ defined above.
\end{definition}

% Since $\lD_n^{fr}=\lD_n\circ \SO(n)$, it follows that the homology of the framed little $n$-disks operad is $\Pois_n\circ H_\bullet(SO(n))$, with relation to the induced action by $H_\bullet(SO(n))$ on $\Pois_n$. In arity $r$ the homology is thus $\Pois_n(r)\otimes H_\bullet(SO(n))^{\otimes r}$. We can view an element of $H_\bullet(\lD_n^{fr})(r)$ as a tree from $\mathsf{Pois}(r)$, with its leaves decorated by elements from $H_\bullet(SO(n))$, so for example
% \begin{align*}
% \begin{tikzpicture}[baseline=-3ex]
% \draw (0,-0.5)--(0,0);
% \node at (0,0.2){$\cdot$};
% \draw (-0.3,0.3)--(-0.8,0.8);
% \draw (0.3,0.3)--(0.8,0.8);
% \node at (0.8,1){$[\ ,\ ]$};
% \draw (1.1,1.1)--(1.4,1.4);
% \draw (0.5,1.1)--(0.2,1.4);
% \node at (-0.8,1){$x_1$};
% \node at (0.2,1.6){$x_2$};
% \node at (1.4,1.6){$x_3$};
% \end{tikzpicture}
% ,
% \end{align*}
% 
% where $x_1,x_2$ and $x_3$ lie in $H_\bullet(SO(n))$. The composition is given by combining grafting with the induced action.

\subsection{The special orthogonal group}\label{sec:son}
We will consider throughout the remainder of the paper the Lie group $G:=\SO(n)$.
Furthermore, we will denote by $\alg g=\pi_{\geq 2}(G)\otimes \R$ the real homotopy groups of $\SO(n)$, considered as an abelian graded Lie algebra.
Note in particular that $\alg g$ is not the Lie algebra associated to $G$ (i.e., $\mathfrak{so}_n$).
The (real) cohomology of $G$ is well known to be
\[
H(G)=H(\SO(n))\cong
\begin{cases}
\R[ p_3,p_7,\ldots,p_{2n-3}] & \text{for $n$ odd,}\\
\R[p_3,p_7,\ldots,p_{2n-5},E_{n-1}] & \text{for $n$ even}
\end{cases},
\]
where the subscripts indicate the degrees of the corresponding generators.
Note that $H(\SO(n))$ is a commutative and cocommutative Hopf algebra, where the stated generators can be chosen to be primitive.
The real homotopy groups of $\SO(n)$ can be identified with the dual of the vector space spanned by those elements, i.e.,
\[
 \alg g = 
\begin{cases}
\R\{ p_3^*,p_7^*,\ldots,p_{2n-3}^*\} & \text{for $n$ odd,}\\
\R\{p_3^*,p_7^*,\ldots,p_{2n-5}^*,E_{n-1}^*\} & \text{for $n$ even}
\end{cases}.
\]
Let us also recall the cohomology of the classifying space $BG$
\[
H(BG)=H(B\SO(n))\cong
\begin{cases}
\R[ p_4,p_8,\ldots,p_{2n-2}] & \text{for $n$ odd,}\\
\R[p_4,p_8,\ldots,p_{2n-4},E_{n}] & \text{for $n$ even}
\end{cases},
\]
where again the subscripts indicate the cohomological degree of the generators.
Note that the algebras $H(BG)$ and $H(G)$ are Koszul dual. We shall use the Koszul complex 
\[
 K = (H(G)\otimes H(BG), d).
\]
Here the Koszul differential $d$ is defined on an element $x\otimes y$ as
\[
 d(x\otimes y) = \sum (-1)^{x'} x' \otimes (sx'')y,
\]
where we use Sweedler notation $x\mapsto \sum x'\otimes x''$ to denote the coproduct of the Hopf algebra $H(G)$.
The Koszul complex is a dgca, and by Koszulness of the algebras involved its cohomology is one-dimensional, spanned by the algebra unit
\[
 H(K)= \R.
\]

Furthermore, note that $H(G)$ may be considered the universal enveloping coalgebra of $\alg g^*$
\[
 H(G) = \mU^c \alg g^*
\]
and similarly $H(BG)$ can be identified with the Chevalley-Eilenberg complex.

We shall also need quasi-free resolutions of $\alg g$. The minimal resolution is the cobar construction
\beq{equ:hagdef}
 \hag = \Omega(H_\bullet(BG)),
\eeq
which agrees with the free Lie algebra generated by the coaugmentation coideal $\bar H_\bullet(BG)[-1]$, with a differential defined using the cocommutative coproduct on that space.
We similarly consider the graded dual $\hag^*$, and the universal enveloping algebras and coalgebras
\begin{align}
\label{equ:hatHdef}
 \hat H_\bullet(G) &:= \mU\hag & \hat H(G) &:= \mU^c\hag^*.
\end{align}
We note in particular that $\hat H(G)$ is the space of words in letters (a basis of) $\bar H(BG)[1]$, with the shuffle product and deconcatenation coproduct.
We can use our resolutions to define a resolution of the Koszul complex $K$ above,
\beq{equ:hatKdef}
 \hat K := (\hat H(G)\otimes H(BG), d).
\eeq
Here the Koszul differential $d$ is defined on a word $w_1\cdots w_r\in \hat H(G)$ (with letters $w_1,\dots,w_r\in \bar H(BG)[1]$) and $y\in H(BG)$ by
\[
 d(w_1\cdots w_r\otimes y) = (-1)^{|w_1|+\cdots+|w_{r-1}|} w_1\cdots w_{r-1}\otimes (sw_r) y.
\]

\section{Graph complexes}\label{sec:graphcpx}

A graph complex is a dg vector space whose elements are linear combinations or series of some sort of graphs.
In this section, we introduce two examples of graph complexes; the complexes (dg Lie algebras) $\GC_n$ and the Hopf cooperads $\GG_n$, which were both defined by Kontsevich. We recall here the construction, following roughly \cite{Willwacher2014,DolgushevWillwacher2015}.

First, let $\mathsf{dgra}_{N,k}$ be the set of directed graphs with $N$ vertices and $k$ directed edges.
More concretely such a graph for us is given by two functions (source and target map) 
\[
 s,t : \{1,\dots, k\}\to \{1,\dots, N\}.
\]
Let $$\mathsf{dgra}_{N,k}^{conn}\subset \mathsf{dgra}_{N,k}$$ be the subset of connected graphs.
Let $S_2^{\times k}$ act on the set of edges by flipping their direction. Let $S_k$ act on the set of edges by reordering, and $S_N$ act on the set of vertices by reordering. Let $\mathrm{sgn}_k$ be the one-dimensional sign representation of $S_k$.
Given some integer $n$ we then define a graded vector space
\[
 \G_n' := 
\begin{cases}
\bigoplus_{N,k}\left( \R \mathsf{dgra}_{N,k}^{conn, \geq 2}\otimes_{S_N\times S_k\ltimes S_2^k} \sgn_k  \right)[(N-1)n-k(n-1)] & \text{for $n$ even} \\
\bigoplus_{N,k}\left( \R \mathsf{dgra}_{N,k}^{conn, \geq 2}\otimes_{S_N\times S_k\ltimes S_2^k} (\sgn_N\otimes \sgn_2^{\otimes k})  \right)[(N-1)n-k(n-1)] & \text{for $n$ odd}
\end{cases}.
\]
In words, we consider vertices as having degree $-n$ and edges having degree $n-1$, and take care of appropriate sign factors under interchange of the position in the ordering of odd objects. 

The graded vector space $\G_n$ is naturally a dg Lie coalgebra with the Lie cobracket given by subgraph contraction,
\[
 \Delta \Gamma = \sum_{\gamma\subset \Gamma} \pm \Gamma/\gamma \wedge \gamma,
\]
where the sum is over connected subgraphs of $\Gamma$ and $\Gamma/\gamma$ is obtained by contracting $\gamma\subset \Gamma$ to a vertex. The signs are induced by the ordering of odd objects imposed.
For example, for $n$ even the sign is the sign of the permutation that moves the edges in $\Gamma/\gamma$ to the left of those in $\gamma$, relative to the original ordering of edges being given as part of the data $\Gamma$.

We also consider the dual space
\[
 \GC_n' = (\G_n')^*,
\]
which is naturally a graded Lie algebra by duality. Now one easily checks that the element
\beq{equ:deltadef}
\delta =\frac{1}{2} \begin{tikzpicture}[scale=1,
vert/.style={draw,outer sep=0,inner sep=0,minimum size=3,shape=circle,fill},
helper/.style={outer sep=0,inner sep=0,minimum size=3,shape=coordinate},
default edge/.style={draw},
 every loop/.style={out=140, in=50, looseness=.8, distance=.8cm }]
\node (v0) at (0,0.72) [vert] {};
\node (v1) at (0.7,0.72) [vert] {};
\draw[default edge] (v0)--(v1);
\end{tikzpicture}\in \GC_n'
\eeq
is a Maurer-Cartan (MC) element, i.e., $[\delta,\delta]=0$. We impose a differential on $\GC_n'$ by twisting with $\delta$, and dually on $\GG_n'$ by cotwisting. We also define the dg Lie subalgebra (resp. quotient)
\begin{align*}
\GC_n &\subset (\GC_n')^\delta & \G_n &\twoheadleftarrow (\G_n')^\delta
\end{align*}
consisting of 1-vertex irreducible graphs with at least bivalent vertices (resp. the quotient modulo graphs with vertices of lower valency or not 1-vertex irreducible).
Combinatorially the differential on $\G_n$ is given by edge contraction
  \begin{align*}
  d \Gamma &= \sum_{e \text{ edge} }  \pm 
 \underbrace{ \Gamma / e}_{\text{contract $e$} }
%  \begin{tikzpicture}[baseline=-.65ex]
%   \node[circle] (g) at (0,0) {$\Gamma$};
%   \draw[dotted] (1,0) circle (.3); 
%   \node[int] (v) at (1,.2) {};
%   \node[int] (w) at (1,-.2) {};
%  % \node at (1.4,.3) {$\scriptstyle v$};
%   \draw (v) edge (w);
%   \draw (g.north east) edge (v)  edge (w) (g.east) edge (v) (g.south east) edge (v) edge (w) ;
%  \end{tikzpicture}
&
\begin{tikzpicture}[baseline=-.65ex]
\node[int] (v) at (0,0) {};
\node[int] (w) at (0.5,0) {};
\draw (v) edge node[above] {$e$} (w) (v) edge +(-.3,-.3)  edge +(-.3,0) edge +(-.3,.3)
 (w) edge +(.3,-.3)  edge +(.3,0) edge +(.3,.3);
\end{tikzpicture}
&\mapsto
\begin{tikzpicture}[baseline=-.65ex]
\node[int] (v) at (0,0) {};
\draw (v) edge +(-.3,-.3)  edge +(-.3,0) edge +(-.3,.3) edge +(.3,-.3)  edge +(.3,0) edge +(.3,.3);
\end{tikzpicture}
  \end{align*}

Next we recall the definition of the Kontsevich Hopf cooperad $\GG_n$.
Let $$\mathsf{dgra}_{N,M,k}^{adm}$$ be the set of directed graphs with $k$ edges and $N+M$ vertices, satisfying the following condition.
We call the first $N$ vertices external, and the last $M$ internal. Then we require that there are no connected components in the graph consisting only of internal vertices.
There is a natural action of the group $S_N\times S_M\times S_k \ltimes S_2^k$ on the set $\mathsf{dgra}_{N,M,k}^{adm}$ by reordering vertices and edges, and flipping edge directions.
Given again an integer $n$ we define
\[
 \GG_n'(r) = 
\begin{cases}
\bigoplus_{M,k} \left( \R\mathsf{dgra}_{N,M,k}^{adm}\otimes_{S_M\times S_k\ltimes S_2} \sgn_k \right)[Mn-k(n-1)]  & \text{for $n$ even} \\
\bigoplus_{M,k} \left( \R\mathsf{dgra}_{N,M,k}^{adm}\otimes_{S_M\times S_k\ltimes S_2} (\sgn_M\otimes \sgn_2^k) \right)[Mn-k(n-1)]  & \text{for $n$ odd}.
\end{cases}
\]
Taking the quotient by the group effectively forgets the direction of edges and the ordering on the edges and internal vertices.
We illustrate an element of $\GG_n(r)$ as a graph with undirected edges and where the internal vertices are unlabelled black dots and the external vertices are numbered circles, as in the following example.

%\iffalse
\[\begin{tikzpicture}[scale=1,
vert/.style={draw,outer sep=0,inner sep=0,minimum size=5,shape=circle,fill},
helper/.style={outer sep=0,inner sep=0,minimum size=5,shape=coordinate},
default_edge/.style={draw},
every loop/.style={min distance=10mm,looseness=30}]

\node[ext] (v1) at(0,0){$1$};
%\node (l1) at (0,-0.3){$1$};
\node[ext] (v2) at(1.5,0){$2$};
%\node (l2) at (1.5,-0.3){$2$};
\node (l3) at (3,0){$\cdots$};
\node[ext] (v3) at(4.5,0){$r$};
%\node (l4) at (4.5,-0.3){$r$};

\node (v4) at (0.8,1)[vert]{};
\node (v5) at (3.3,1.8)[vert]{};
\node (v6) at (2,0.8)[vert]{};
\node (v7) at (3.5,0.7)[vert]{};
\node (v8) at (0.2,1.4)[vert]{};

\draw (v1)--(v2);
\draw (v1)--(v4);
\draw (v4)--(v5);
\draw (v5)--(v6);
\draw (v5)--(v7);
\draw (v5)--(v3);
\draw (v6)--(v7);
\draw (v2)--(v4);
\draw (v2)--(v6);
\draw (v3)--(v7);
\draw (v3)--(v6);
\draw (v1)--(v8);
\draw (v4)--(v8);
\draw (v5)--(v8);
\end{tikzpicture}\]

Each space $\GG_n(r)$  is a graded commutative algebra, the product defined by gluing graphs at external vertices.
\beq{equ:graphmult}
      \begin{tikzpicture}[baseline=-.65ex]
  \node[ext] (v1) at (0,0) {1};
  \node[ext] (v2) at (0.5,0) {2};
  \node[ext] (v3) at (1,0) {3};
  \node[ext] (v4) at (1.5,0) {4};
  \node[int] (w1) at (0.5,.7) {};
  \draw (v1) edge (v2) edge (w1) (v2) edge (w1) (v3) edge (w1) ;
 \end{tikzpicture}
 \wedge
     \begin{tikzpicture}[baseline=-.65ex]
  \node[ext] (v1) at (0,0) {1};
  \node[ext] (v2) at (0.5,0) {2};
  \node[ext] (v3) at (1,0) {3};
  \node[ext] (v4) at (1.5,0) {4};
  \node[int] (w2) at (1.0,.7) {};
  \draw   (v2)  edge (w2) (v3)  edge (w2) (v4) edge (w2);
 \end{tikzpicture}
 =
     \begin{tikzpicture}[baseline=-.65ex]
  \node[ext] (v1) at (0,0) {1};
  \node[ext] (v2) at (0.5,0) {2};
  \node[ext] (v3) at (1,0) {3};
  \node[ext] (v4) at (1.5,0) {4};
  \node[int] (w1) at (0.5,.7) {};
  \node[int] (w2) at (1.0,.7) {};
  \draw (v1) edge (v2) edge (w1) (v2) edge (w1) edge (w2) (v3) edge (w1) edge (w2) (v4) edge (w2);
 \end{tikzpicture}
  \eeq
We also consider the dual spaces $\Graphs_n'(r):=\GG_n'(r)^*$.
The spaces $\GG_n'(r)$ furthermore assemble into a Hopf cooperad, with the cocomposition being given by subgraph contraction. For example, the cocomposition corresponding to the tree $T$ as in \eqref{equ:treeex} is
\beq{equ:graphcoop}
 \Delta_T\Gamma = \sum_{\Gamma' \subset \Gamma}\pm (\Gamma/\Gamma')\otimes \Gamma',
\eeq
where the sum is over all subgraphs $\Gamma'\subset \Gamma$ containing the external vertices $i,\dots,j$ and $\Gamma/\Gamma'$ is obtained by contracting $\Gamma'$.
The sign is determined lexicographically by the implicit ordering of odd objects, e.g., the ordering of edges in the case of even $n$.

Next, the spaces $\GG_n'$ carry (co)actions of the graphical Lie (co)algebras $\G_n'$, $\GC_n'$ above.
Concretely, similarly to formula \eqref{equ:graphcoop} we may consider a right $\G_n'$-coaction 
\[
\GG_n' \ni \Gamma \mapsto \sum_{\gamma\subset \Gamma} \pm ( \Gamma/\gamma) \otimes \gamma \in \GG_n' \otimes \G_n',
\]
where now we sum over all subgraphs $\gamma\subset \Gamma$ consisting of only internal vertices.
It is convenient to also consider the dual right action of $\GC_n'$, given by the formula
\beq{equ:bulletdef}
\Gamma \bullet \nu = \sum_{\gamma\subset \Gamma} \pm ( \Gamma/\gamma) \langle \gamma,\nu\rangle,
\eeq
where $\nu\in \GC_n'$ and $\langle-,-\rangle:\G_n'\otimes \GC_n'\to \R$ denotes the duality pairing.
The action $\bullet$ the commutative product and the cooperad structure.

Now, for any coperad $\op C$ the unary cooperations form a coalgebra, hence in particular a Lie coalgebra, and this coalgebra coacts on $\op C$. Dually, $\op C(1)^*$ is a Lie algebra acting on $\op C$.
Abusing notation slightly, we denote the coaction of $\Gamma_1\in \op C(1)^*$ on $\Gamma\in \op C(r)$ by $[\Gamma_1,\Gamma]_\circ\in \op C(r)$. It is given by the formula
\[
[\Gamma_1,\Gamma]_\circ = 
\sum \langle \Gamma_1,\Gamma'\rangle \Gamma''
- \sum \pm \Gamma' \langle \Gamma_1,\Gamma''\rangle
\]
where in each sum we sum over cocompositions into two factors of which one has arity 1 and one arity $r$.

Since the action $\bullet$ respects the cooperad structure and since $[-,-]_\circ$ is defined using only the cooperad structure one can see that both action are compatible in the sense that for $\Gamma_1\in \Graphs_n'(1)$, $\Gamma\in \G_n(r)$ and $\gamma\in \GC_n'$
\beq{equ:action comp0}
[\Gamma_1,\Gamma]_\circ \bullet \gamma = [\Gamma_1,\Gamma \bullet \gamma]_\circ 
+
(-1)^{|\gamma| |\Gamma|}[\Gamma_1\bullet \gamma, \Gamma ]_\circ.
\eeq
Combining both actions $\bullet$ and $[-,-]_\circ$ we hence get an action of the semidirect product $\GC_n'\ltimes \Graphs_n'(1)$ on $\GG_n'$. One can then check that the extension of graded Lie algebras
\[
0 \to \Graphs_n'(1) \to \GC_n'\ltimes \Graphs_n'(1) \to \GC_n' \to 0
\]
splits, and in particular we have a map $\GC_n' \to \GC_n'\ltimes \Graphs_n'(1)$. The piece $\GC_n'\to \Graphs_n'(1)$ sends a graph $\gamma$ to the graph $\gamma_1$ obtained by making one of the vertices external.
We hence arrive at the following action of $\GC_n'$ on $\GG_n'$
\beq{equ:actdef0}
\gamma \cdot \Gamma = [\gamma_1,\Gamma]_\circ + (-1)^{|\gamma||\Gamma|} \Gamma\bullet \gamma.
\eeq
We note that this action respects the cocomposition, but not readily the product. However, the restriction to the 1-vertex irreducible graphs respects the product as well.

We can now use our MC element \eqref{equ:deltadef} to twist the Hopf cooperad $\GG_n'$ to a dg Hopf cooperad $(\GG_n')^\delta$. We finally restrict to the quotient $\GG_n$ obtained by equating to zero graphs with less than bivalent internal vertices
\[
\GG_n \twoheadleftarrow (\GG_n')^\delta.
\]
Finally we arrive at the pair $(\GC_n, \GG_n)$ consisting of a dg Lie algebra acting compatibly on a dg Hopf cooperad.

Let us note that the differential on $\GG_n$ is combinatorially given by edge contraction again
  \begin{align*}
   \delta
   \begin{tikzpicture}[baseline=-.65ex]
    \node[ext] (v) at (0,0) {};
    \node[int](w) at (0,.3) {};
    \draw (v) edge +(-.5,.5) edge +(.5,.5) edge (w) (w) edge +(-.2,.5) edge +(.2,.5);
   \end{tikzpicture}
&=
   \begin{tikzpicture}[baseline=-.65ex]
    \node[ext] (v) {};
    \draw (v) edge +(-.5,.5) edge +(-.2,.5) edge +(.2,.5) edge +(.5,.5);
   \end{tikzpicture}
   &
   \delta
   \begin{tikzpicture}[baseline=-.65ex]
    \node[int] (v) at (0,0) {};
    \node[int](w) at (0,.3) {};
    \draw (v) edge +(-.5,.5) edge +(.5,.5) edge (w) (w) edge +(-.2,.5) edge +(.2,.5);
   \end{tikzpicture}
&=
   \begin{tikzpicture}[baseline=-.65ex]
    \node[int] (v) {};
    \draw (v) edge +(-.5,.5) edge +(-.2,.5) edge +(.2,.5) edge +(.5,.5);
   \end{tikzpicture}\, .
  \end{align*}

\begin{remark}
It has been shown by Kontsevich and Lambrechts-Voli\'c \cite{Kontsevich1999, LambrechtsVolic2014} that $\GG_n$ is a Hopf cooperad model for the little $n$-disks operad for $n\geq 2$, and this model plays a central role in their proof of formality of the little $n$-disks operad.
\end{remark}

\section{Khoroshkin and Willwacher's model for the framed little $n$-disks operad}\label{sec:KWmodel}
The Hopf cooperad model for the framed little disks operad of \cite{KW} is the semidirect product
\[
\GG_n\circ \hat H(G),
\]
for a certain Hopf coaction of the commutative dg Hopf algebra $\hat H(G)$ on $\GG_n$ which we shall now describe.
First, note that a Hopf coaction of $\hat H(G)$ is the same as an $L_\infty$-action of the abelian Lie algebra $\pi^\R(\SO(n))$.
Given the action of $\GC_n$ on $\GG_n$ from the previous section the desired $L_\infty$-action may be encoded by an $L_\infty$-map
\[
\alg g= \pi^\R(\SO(n)) \to \GC_n.
\]
Such a map in turn is equivalent data to a dg Lie algebra map (cf. \eqref{equ:hagdef})
\[
\hag \to \GC_n
\]
and equivalent data to a Maurer-Cartan element 
\[
m\in \bar H(BG)\hat \otimes \GC_n.
\]
The main result of \cite{KW} is the following.
\begin{theorem}[\cite{KW}]\label{thm:KW}
The MC element $m\in H(BG)\hat \otimes \GC_n$ encoding the action of $\SO(n)$ on the little $n$-disks operad (with $n\geq 2$) can be taken to have the form
\beq{equ:mdef}
m=
\begin{cases}
E_n
\begin{tikzpicture}
[scale=1,
vert/.style={draw,outer sep=0,inner sep=0,minimum size=3,shape=circle,fill},baseline=0.3ex]
 \node (1) at (0,0)[vert]{};
 \draw (1) to [out=40,in=0,looseness=1.5] (0,0.4);
 \draw (1) to [out=140,in=180,looseness=1.5] (0,0.4);
\end{tikzpicture} & \text{for $n$ even} \\
   \begin{tikzpicture}[scale=1,
vert/.style={draw,outer sep=0,inner sep=0,minimum size=3,shape=circle,fill},
helper/.style={outer sep=0,inner sep=0,minimum size=3,shape=coordinate},
default edge/.style={draw},
 every loop/.style={out=140, in=50, looseness=.8, distance=.8cm },baseline=-0.5ex]
%\node (l0) at (-3.4,0){$m=$};
\node (l1) at (-1.7,0) {$\underset{k\ge 1}{\sum}\frac{p_{2n-2}^k}{4^k}\frac{1}{2(2k+1)!}$};
\node (l2) at (0,0){$\cdots$};
\node (0) at (0,-0.6) [vert] {};
\node (1) at (0,0.6) [vert] {};
\draw (0) to [out=20,in=-20]  (1);
\draw (0) to [out=40,in=-40]  (1);
\draw (0) to [out=160,in=-160]  (1);
\draw (0) to [out=140,in=-140]  (1);
\node (l3) at (3,0){($2k+1$ edges)};
\end{tikzpicture}
&\text{for $n$ odd}
\end{cases}.
\eeq
Accordingly, a Hopf cooperad model for the framed little disks operad is obtained as 
\[
\GG_n\circ \hat H(G),
\]
with the semidirect product taken with respect to the $\hat H(G)$-action encoded by the above MC element.
\end{theorem}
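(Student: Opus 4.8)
The plan is to reduce the entire statement to the construction and identification of the Maurer--Cartan element $m$, exploiting the chain of equivalences spelled out just above the theorem. Indeed, once we produce an $m \in \bar H(BG)\hotimes \GC_n$ satisfying the Maurer--Cartan equation, the associated dg Lie map $\hag \to \GC_n$ yields an $L_\infty$-action of $\alg g = \pi^\R(\SO(n))$ on $\GG_n$, hence a Hopf coaction of $\hat H(G)$, and then the semidirect product $\GG_n \circ \hat H(G)$ is automatically a Hopf cooperad by the general construction recalled in the preliminaries. So the real content is twofold: (i) that the $\SO(n)$-action on the real homotopy type of $\lD_n$ is encoded by some MC element of this shape, and (ii) that this element can be chosen to be the explicit one displayed in \eqref{equ:mdef}.

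For (i), I would model the $\SO(n)$-action geometrically. The model $\GG_n$ arises from Kontsevich's configuration space integrals, in which the basic building block is the propagator $\omega \in \Omega^{n-1}(\Conf_2(\R^n))$ pulled back from the rotation-invariant volume form on the sphere of directions $S^{n-1}$. The rotation action of $\SO(n)$ on the little disks acts on configurations by rotating the relative directions, so homotopically the action is governed by the $\SO(n)$-equivariant structure of $\omega$. Concretely I would pass to the Cartan (or Weil) model of $\SO(n)$-equivariant forms on $S^{n-1}$ and seek an equivariantly closed extension $\tilde\omega = \omega + (\text{curvature corrections})$ of the propagator. The successive obstructions to equivariant closedness live in the invariant polynomials $\R[\alg{so}_n^*]^{\SO(n)}$, i.e., in the characteristic classes of the universal bundle, and translating the equivariant integrals back into the graphical language yields an MC element of exactly the required type in $\bar H(BG) \hotimes \GC_n$.

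For (ii) --- the explicit shape --- I would carry out the characteristic-class bookkeeping. Because $S^{n-1} = \SO(n)/\SO(n-1)$, the fiber integral of the equivariant volume form is the equivariant Euler class for $n$ even and is governed by the Pontryagin classes otherwise, which is the origin of the dichotomy in \eqref{equ:mdef}. For $n$ even the leading correction is a single term pairing the Euler class $E_n$ with the tadpole graph; for $n$ odd the Euler class vanishes and one instead gets a whole series in the top Pontryagin class $p_{2n-2}$, whose $k$-th term pairs $p_{2n-2}^k$ with the two-vertex graph carrying $2k+1$ parallel edges, the combinatorial factors $\tfrac{1}{4^k}\tfrac{1}{2(2k+1)!}$ emerging from the normalization of the propagator, the volume of $S^{n-1}$, and the symmetry factors of these graphs. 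A degree count (vertices in degree $-n$, edges in degree $n-1$, then dualized) confirms that every term lands in the same total degree, so the expression is homogeneous and the list of graphs that can occur is rigidly constrained; combined with a computation of $H(\GC_n)$ in the relevant bidegrees this pins down $m$ up to gauge, and one verifies the Maurer--Cartan equation $dm + \tfrac12[m,m]=0$ directly on these graphs, the series being arranged precisely so that the bracket terms cancel the differential order by order.

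The main obstacle is steps (i)--(ii) together: establishing that this purely algebraic $m$ really computes the $\SO(n)$-equivariant real homotopy type, rather than merely being a formal MC element of the right degree. This requires a genuine comparison theorem --- an equivariant enhancement of the configuration-space-integral quasi-isomorphism $\GG_n \to \Omega(\lD_n)$ --- showing that the forms produced by the equivariant propagator agree, up to homotopy, with the pullback of forms along the $\SO(n)$-action. Granting that comparison, the semidirect product $\GG_n \circ \hat H(G)$ is quasi-isomorphic to $\Omega(\flD_n)$ and is therefore the desired model.
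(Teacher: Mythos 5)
The first thing to say is that the paper does not prove this statement at all: Theorem \ref{thm:KW} is imported verbatim from \cite{KW} (``The main result of \cite{KW} is the following''), so there is no internal argument to compare yours against. What you have written is a reconstruction of the strategy of \cite{KW} itself, and at the level of the roadmap it is the right one: the $\SO(n)$-action on the real homotopy type of $\lD_n$ is encoded by an equivariantly closed extension of the Kontsevich propagator in a Cartan/Weil model, the obstructions live in $H(B\SO(n))$, and the even/odd dichotomy in \eqref{equ:mdef} does originate in the Euler-class versus Pontryagin-class behaviour of the sphere $S^{n-1}$ of directions. Your reduction of the second sentence of the theorem (the semidirect product being a Hopf cooperad model) to the existence and identification of $m$ is also correct and matches how the present paper uses the result.

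As a proof, however, the proposal has a genuine gap, which you yourself flag but do not close: every step of substance is deferred. (a) The ``equivariant enhancement of the configuration-space-integral quasi-isomorphism'' is not a comparison one can grant in passing; it is the core of \cite{KW}, requiring an equivariant propagator on the Fulton--MacPherson compactifications and control of all boundary strata in the equivariant Stokes argument. (b) Your mechanism for pinning down the explicit form is underpowered as stated: degree homogeneity only constrains which graphs may appear, and the appeal to ``a computation of $H(\GC_n)$ in the relevant bidegrees'' presupposes nontrivial partial computations of graph cohomology (in general $H(\GC_n)$ is unknown) that you neither perform nor cite; moreover the coefficients $\tfrac{1}{4^k}\tfrac{1}{2(2k+1)!}$ are not determined by the Maurer--Cartan equation together with degree reasons alone --- they come from actually evaluating the fiber integrals (volume of $S^{n-1}$, symmetry factors), or from a careful obstruction-theoretic normalization, neither of which is carried out. (c) Verifying $dm+\tfrac12[m,m]=0$ is necessary but nowhere near sufficient: the content of the theorem is that \emph{this} algebraic $m$ encodes the \emph{geometric} $\SO(n)$-action, which is precisely the step you grant. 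So the proposal is a plausible outline of the proof in \cite{KW}, not a proof.
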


One shortcoming of this model is that many homotopy biderivations of $\GG_n\circ \hat H(G)$ are not readily realized by actual biderivations.

\section{A new model for the framed little $n$-disks operad}
\subsection{A general construction}\label{sec:gen cons}
Our new Hopf cooperad model for the framed little $n$-disks operad $\LG_n$ is defined similar to $\GG_n$, but with the difference that we put additional decorations on vertices.
We will start with a general construction, using the following data:
\begin{itemize}
\item A commutative Hopf algebra $H$.
\item A commutative Hopf $H$-comodule $K$. (I.e., $K$ has a commutative product and a compatible $H$-comodule structure.)
\end{itemize}

From these data we will construct a Hopf cooperad $\LG_{H,K}'$.
More concretely, we define, as graded vector space
\begin{multline*}
\LG_{n,H,K}'(r) := \\
\begin{cases}
\bigoplus_{M,k} \left( (\R\mathsf{dgra}_{N,M,k} \otimes K^{\otimes M} \otimes H^{\otimes r})\otimes_{S_M\times S_k\ltimes S_2} \sgn_k \right)[Mn-k(n-1)]  & \text{for $n$ even} \\
\bigoplus_{M,k} \left( (\R\mathsf{dgra}_{N,M,k}\otimes K^{\otimes M}\otimes H^{\otimes r})\otimes_{S_M\times S_k\ltimes S_2} (\sgn_M\otimes \sgn_2^k) \right)[Mn-k(n-1)]  & \text{for $n$ odd}.
\end{cases}
\end{multline*}
In words, we consider graphs with two types of vertices as before, but now we additionally decorate every external vertex by an element from $H$ and every internal vertex by an element of $K$.
\[\begin{tikzpicture}[scale=1,
vert/.style={draw,outer sep=0,inner sep=0,minimum size=5,shape=circle,fill},
helper/.style={outer sep=0,inner sep=0,minimum size=5,shape=coordinate},
default_edge/.style={draw},
ext/.style={draw,outer sep=0,inner sep=0,minimum size=5,shape=circle},
every loop/.style={min distance=10mm,looseness=30}]

\node[label=180:{$\scriptstyle H$}] (v1) at(0,0)[ext]{};
\node (l1) at (0,-0.3){$1$};
\node[label=90:{$\scriptstyle H$}] (v2) at(1.5,0)[ext]{};
\node (l2) at (1.5,-0.3){$2$};
\node (l3) at (3,0){$\cdots$};
\node[label=90:{$\scriptstyle H$}] (v3) at(4.5,0)[ext]{};
\node (l4) at (4.5,-0.3){$r$};

\node[label=90:{$\scriptstyle K$}] (v5) at (3.3,1.8)[vert]{};
\node[label=180:{$\scriptstyle K$}] (v6) at (2,0.8)[vert]{};
\node[label=30:{$\scriptstyle K$}] (v7) at (3.5,0.7)[vert]{};
\node[label=180:{$\scriptstyle K$}] (v8) at (0.2,1.4)[vert]{};

\draw (v1)--(v2);
\draw (v5)--(v6);
\draw (v5)--(v7);
\draw (v5)--(v3);
\draw (v6)--(v7);
\draw (v2)--(v6);
\draw (v3)--(v7);
\draw (v3)--(v6);
\draw (v1)--(v8);
\draw (v2)--(v8);
\draw (v5)--(v8);
\end{tikzpicture}
\]
Each space $\LG_{n,H,K}'(r)$ is naturally a graded commutative algebra. The product is merely the union of graphs at external vertices as in \eqref{equ:graphmult}, with the additional provision that the decorations at the external vertices $H$ are multiplied using the commutative product on $H$.

Second, the graded vector spaces $\LG_{n,H,K}'(r)$ assemble into a Hopf cooperad $\LG_{n,H,K}'$.
The cocomposition is defined by subgraph contraction as in \eqref{equ:graphcoop}.
\beq{equ:cocomp}
\Delta_T \Gamma = \sum_{\Gamma'\subset \Gamma} \pm (\Gamma/\Gamma') \otimes \Gamma'.
\eeq
However, we need to explain how to handle the additional decorations on the subgraph $\Gamma'$ and the contracted graph $\Gamma/\Gamma'$.
Suppose the internal vertices of $\Gamma'$ are decorated by elements $\kappa_1,\dots ,\kappa_k\in K$, and the external vertices by $h_1,\dots, h_l\in H$. Denote the coactions and coproducts by 
\begin{align*}
\Delta \kappa_j &= \sum k_j'  \otimes \kappa_j'' & \Delta h_j &= \sum h_j' \otimes h_j'',
\end{align*}
with $k_j', h_j', h_j''\in H$ and $\kappa_j''\in K$.
Then the decorations on the vertices of $\Gamma'$ on the right-hand side of \eqref{equ:cocomp} are taken to be $\kappa_1'',\dots \kappa_k'', h_1'',\dots ,h_l''$, and the decoration on the new external vertex of $\Gamma/\Gamma'$ created by contracting $\Gamma'$ is taken to be the product $\pm k_1'\cdots k_k' h_1'\cdots h_l'\in H(G)$.
The sign is chosen lexicographically, relative to the ordering of all objects given.
One can easily check that the above cocomposition morphisms endow the collection $\LG_{n,H,K}'$ with the sructure of a Hopf cooperad. 

Evidently, the construction is functorial in the pair $(H,K)$.
In particular, suppose that we have the following additional datum.
\begin{itemize}
\item A compatible augmentation $K\to \R$.
\end{itemize}
Together with the counit of $H$ this gives us in particular a map of Hopf algebra/comodule pairs $(H,K)\to (\R,\R)$.
Hence, by functoriality, we get a map of Hopf cooperads
\[
\LG_{n,H,K}' \to \LG_{n,\R,\R}' = \GG_n'.
\]
As in section \ref{sec:graphcpx}, for any cooperad $\op C$ the unary operations $\op C(1)$ form a coalgebra which coacts on $\op C$.
Using the above map and duality we hence get an action of the algebra $\GG_n'(1)^* = \Graphs_n(1)'$.
Abusing notation slightly, we denote this action by 
\[
[\Gamma_1 ,\Gamma]_\circ
\]
for $\Gamma_1\in \Graphs_n(1)'$, $\Gamma\in \LG_{n,H,K}'$.

Next suppose that we have in addition the following data:
\begin{itemize}
\item An augmented commutative algebra $A$, with a compatible $K$-action. We consider here $H$ to be equipped with the trivial $A$-action via the augmentation.
\end{itemize}

Then we can formulate a right action on $\LG_{n,H,K}'$ by the Lie algebra $A \hat \otimes \GC_n'$ which generalizes the right action $\bullet$ considered in section \ref{sec:graphcpx}.
For $\Gamma\in \LG_{n,H,K}$, $a\in A$ and $\gamma\in \GC_n'$ the action is given by the formula (cf. \eqref{equ:bulletdef})
\beq{equ:cdotdef}
\Gamma \bullet (a\gamma) = \sum_{\nu\subset \Gamma} \pm (\Gamma/_a \nu) \langle \tilde \nu, \gamma\rangle,
\eeq
where the sum is over subgraphs with only internal vertices, and $\Gamma/_a \nu$ and $\tilde \nu\in \G_n'$ are obtained by the following procedure. Suppose the decorations on vertices of $\nu$ are $\kappa_1,\dots, \kappa_m\in K$. We map all $\kappa_j\in K$ to elements $h_j\in H$ via the composition of coaction and augmentation
\beq{equ:KHmap}
K\to H\otimes K \to H.
\eeq
Then the graph $\Gamma/_a \nu$ is obtained by contracting $\nu$ and decorating the newly formed vertex by $h_1\cdots h_m a$. Furthermore $\tilde \nu$ is the graph obtained from $\nu$ by forgetting decorations.
It is an easy exercise to check that the operations $\Gamma\mapsto \Gamma \bullet (a\gamma)$ indeed form a right action of $A \hat \otimes \GC_n'$ on $\LG_{n,H,K}'$, compatible with the product and cooperadic cocomposition.
Since the action $[-,-]_\circ$ was defined just from the cooperad structure we hence also have the compatibility relation (cf. \eqref{equ:action comp0})
\beq{equ:action comp}
[\Gamma_1,\Gamma]_\circ \cdot (a\gamma) = [\Gamma_1,\Gamma \cdot (a\gamma)]_\circ 
+
(-1)^{\gamma\Gamma}[\Gamma_1\cdot \gamma, \Gamma ]_\circ \epsilon(a),
\eeq
for $\Gamma\in \LG_{n,H,K}'$, $a\gamma\in A\hat \otimes \GC_n'$, $\Gamma_1\in \Graphs_n'$ and $\epsilon:A\to \R$ the augmentation.
This means that the semidirect product $A\hat \otimes \GC_n'\ltimes \Graphs_n'(1)$ acts on $\LG_{n,H,K}'$.
Using the augmentation we can extend the map $\GC_n'\to \GC_n'\ltimes \Graphs_n'(1)$ from section \eqref{sec:graphcpx} to a map
\begin{gather*}
A\hat \otimes \GC_n' \to A\hat \otimes \GC_n'\ltimes \Graphs_n'(1) \\
a\gamma \mapsto (a\gamma, \epsilon(a) \gamma_1).
\end{gather*}
Hence we obtain an action of $A\hat \otimes \GC_n'$ on $\LG_{n,H,K}'$ by the formula (cf. \eqref{equ:actdef0})
\beq{equ:actdef}
(a,\gamma) \cdot \Gamma = 
\epsilon(a) [\gamma_1,\Gamma]_\circ
+(-1)^{|a\gamma| |\Gamma|} \Gamma \cdot (a\gamma).
\eeq

The action is compatible with the cooperadic cocompositions, and compatible with the product if we restrict to 1-vertex irreducible graphs as before.

\subsection{Our model(s)}
From the general construction of the previous subsection we may extract several Hopf cooperads with rich symmetry groups.
We will see below that the Hopf cooperads constructed here are indeed models for the framed little $n$-disks operad.
More concretely, we consider the following two cases
\begin{itemize}
\item Take $H=H(G)$ (with $G=\SO(n)$), take for $K$ the Koszul complex $K=(H(BG)\otimes H(G),d)$ of section \ref{sec:son} and take $A=H(BG)$, with the obvious (Hopf) algebra structures and (co)actions.
This yields a dg Hopf cooperad with a (graded) Lie algebra action
\[
 H(BG)\hat \otimes \GC_n' \actson \LG_{n,H(G),K}' .
\]
We twist by the MC element $\delta+m$ of \eqref{equ:deltadef}, \eqref{equ:mdef} to obtain the dg Lie algebra and Hopf cooperad
\[
 (H(BG)\otimes \GC_n')^{\delta+m} \actson (\LG_{n,H(G),K}')^{\delta+m} .
\]
We then restrict to the subspaces $\GC_n\subset \GC_n'$ of 1-vi graphs all of whose internal vertices are at least bivalent.
Furthermore we take the quotient of $\LG_{n,H(G),K}'$ with respect to graphs which have internal vertices of valence $\leq 1$, where we count nontrivial $K$-decorations as contributing to the valence.
Thus we obtain our first model $\LG_n\twoheadleftarrow (\LG_{n,H(G),K}')^{\delta+m}$ which comes with a dg Lie algebra action
\[
 (H(BG)\hat \otimes \GC_n)^m \actson \LG_{n}.
\]
\item We may repeat the same construction with $H=\hat H(G)$, $K=\hat K$ (see \eqref{equ:hatHdef}, \eqref{equ:hatKdef}) and $A=H(BG)$ and obtain 
a dg Hopf cooperad with a (graded) Lie algebra action
\[
 H(BG)\hat \otimes \GC_n' \actson \LG_{n,H(G),K}'.
\]
Twisting again by $\delta+m$ and restricting/quotienting as before defines our second model $\hLG_n\twoheadleftarrow (\LG_{n,H(G),K}')^{\delta+m}$, which comes with a dg Lie algebra action
\beq{equ:actGChLG}
 (H(BG)\hat \otimes \GC_n)^m \actson \hLG_{n}.
\eeq
Via the quasi-isomorphism $(H(G),K)\to (\hat H(G),\hat K)$ and functoriality we obtain a map of dg Hopf cooperads
\beq{equ:LGhLG}
 \LG_n\to \hLG_n.
\eeq
It is easy to see that this map is a quasi-isomorphism as well.

We finally note that the Lie algebra $\Der(H(BG))$ naturally acts on our input data $(\hat H(G), \hat K)$. Hence, using again functoriality, a larger Lie algebra acts on our model,
\[
 (\Der(H(BG))\ltimes H(BG)\otimes \GC_n)^m \actson \hLG_{n}.
\]
\end{itemize}

The main result of this work is the following.
\begin{theorem}\label{thm:main}
 The dg Hopf cooperad $\LG_{n}$ is quasi-isomorphic to the dg Hopf cooperad model $(\GG_n\circ \hat H(G),d)$ of \cite{KW} for the framed little $n$-disks operad, cf. section \ref{sec:KWmodel}.
\end{theorem}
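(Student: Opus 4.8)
The plan is to prove the equivalent statement that the intermediate model $\hLG_n$ is quasi-isomorphic to $\GG_n \circ \hat H(G)$; combined with the quasi-isomorphism $\LG_n \to \hLG_n$ of \eqref{equ:LGhLG}, this yields the theorem. The advantage of working with $\hLG_n$ rather than $\LG_n$ is that both $\hLG_n$ and $\GG_n \circ \hat H(G)$ have their external vertices decorated by the same resolution $\hat H(G)$, so that the only structural difference to be accounted for is the treatment of the internal vertices: in $\GG_n \circ \hat H(G)$ they are undecorated and the $\SO(n)$-framing is recorded through the semidirect product coaction, whereas in $\hLG_n$ they carry decorations in the Koszul complex $\hat K = (\hat H(G) \otimes H(BG), d)$ and the framing is recorded by the twist by $m$.

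I would first write down a morphism of dg Hopf cooperads $\Phi \colon \hLG_n \to \GG_n \circ \hat H(G)$. On the underlying graphs $\Phi$ keeps the external decorations and applies the augmentation $\hat K \to \R$ to each internal vertex, so that a graph all of whose internal decorations are trivial is sent to the corresponding undecorated graph of $\GG_n$ with the same external decorations, while a graph carrying a nontrivial internal decoration is sent to zero. One then checks that $\Phi$ is a map of commutative algebras (gluing at external vertices is preserved, since the augmentation is multiplicative) and that it intertwines the cocompositions \eqref{equ:cocomp} with the semidirect product cocomposition \eqref{equ:twistcocomp}; here the content is that the coaction appearing in \eqref{equ:twistcocomp} is precisely the image under the augmentation of the decoration transport prescribed in \eqref{equ:cocomp}. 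The essential verification is that $\Phi$ is a chain map, i.e.\ that the twist by $m$ in $\hLG_n$ is carried by $\Phi$ to the framing part of the differential of $\GG_n \circ \hat H(G)$: the internal vertices created with $H(BG)$-decorations by \eqref{equ:cdotdef} and \eqref{equ:actdef} are annihilated by the augmentation, and the surviving contributions reproduce exactly the coaction encoded by the Maurer--Cartan element $m$ of Theorem \ref{thm:KW}.

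To see that $\Phi$ is a quasi-isomorphism I would argue by a spectral sequence, filtering both complexes by the number of edges (a filtration that is bounded in each arity and cohomological degree, so convergence is automatic). On the associated graded the differential reduces to the internal differentials on the vertex decorations, namely the Koszul differential on the internal $\hat K$-labels together with the resolution differentials on all $\hat H(G)$-labels; the edge contraction coming from $\delta$ and the twist by $m$ both change the number of edges and hence do not contribute to the leading term. Since the Koszul complex is acyclic, $H(\hat K) = \R$, and since $H(\hat H(G)) = H(G)$, taking cohomology of the decorations collapses every internal vertex to a trivially decorated one and converts the external labels into $H(G)$. The first page is thereby identified with the corresponding associated graded of $\GG_n \circ \hat H(G)$, with $\Phi$ inducing this identification; comparing the differentials on the next page, which on both sides are edge contraction together with the framing term supplied by $m$, one concludes by the comparison theorem for spectral sequences.

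The main obstacle is the interaction of the Koszul collapse with the valence constraint defining $\hLG_n$: recall that a nontrivial $\hat K$-decoration counts towards the valence of an internal vertex, so that replacing such a decoration by the trivial one via the Koszul differential can turn an admissible vertex into an inadmissible univalent one. Consequently the naive associated graded is \emph{not} simply the decoration complex tensored with the underlying graphs, and I expect the bulk of the work to consist in showing that the subcomplex of graphs which become inadmissible after the collapse is itself acyclic. This is precisely where the explicit form of the Koszul differential -- pairing an internal $H(G)$-generator with its suspension, which produces an $H(BG)$-generator -- must be combined with the twist by $m$, whose leading term \eqref{equ:mdef} furnishes exactly the $H(BG)$-decorated internal vertices needed to cancel the offending contributions. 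Carrying out this cancellation, together with the attendant sign bookkeeping dictated by the orderings of edges and of odd vertices, is the technical heart of the argument.
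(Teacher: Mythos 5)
There is a genuine gap, and it sits exactly at the step you describe as routine: the claim that the augmentation map $\Phi\colon\hLG_n\to\GG_n\circ\hat H(G)$ ``intertwines the cocompositions \eqref{equ:cocomp} with the semidirect product cocomposition \eqref{equ:twistcocomp}''. It does not. On $(\hLG_n,d+\delta\cdot+m\cdot,\Delta)$ the framing data $m$ lives entirely in the \emph{differential}, while the cocomposition is the plain decoration-transporting one \eqref{equ:cocomp}: contracting a subgraph whose internal vertices are trivially decorated contributes only the factors $h_i'\cdots h_j'$ to the new external decoration. On $\GG_n\circ\hat H(G)$ the framing data lives entirely in the \emph{cocomposition}: formula \eqref{equ:twistcocomp} inserts the extra factor $h$ coming from the nontrivial $\hat H(G)$-coaction on $\GG_n$ encoded by the MC element $m$ (equivalently by $\phi\colon\mU\hag\to\mU\GC_n$). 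Your $\Phi$ never produces that factor $h$, so it is a chain map and an algebra map but not a map of cooperads; relatedly, there is no ``framing part of the differential of $\GG_n\circ\hat H(G)$'' for the $m\cdot$ term to hit --- it simply dies under the augmentation, and the $m$-information is lost rather than transported into the coaction.

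The missing idea is the mechanism that trades a twist of the differential for a twist of the cocomposition. The paper does this with an explicit gauge transformation: since $H(\hat K)=\R$, the MC element $m$ becomes gauge-trivial in $\hat K\hat\otimes\GC_n$, trivialized by $x=\phi(x_u)$ with $x_u=\log I$; the induced isomorphism $X\cdot=e^{x\cdot}$ of \eqref{equ:Xdef} identifies $(\hLG_n,\delta\cdot+m\cdot,\Delta)$ with $(\hLG_n,\delta\cdot,\Delta^\phi)$, whose cocomposition now carries the $\phi$-twist, and it is \emph{this} object that receives the unit-decoration map \eqref{equ:map1} from $\GG_n\circ\hat H(G)$ (a quasi-isomorphism by $H(\hat K)=\R$, essentially your spectral-sequence argument). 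So the correct statement is a three-step zigzag \eqref{equ:zigzag}, not a single direct morphism. Your observations about the acyclicity of $\hat K$ and the interaction with the valence constraint are relevant to why \eqref{equ:map1} is a quasi-isomorphism, but no amount of spectral-sequence work will repair the failure of $\Phi$ to respect the cooperad structure.
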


We shall prove this theorem in section \ref{sec:proof} after some technical preparation.

\subsection{An extension of the construction I}
Before we conduct the proof of Theorem \ref{thm:main} we need the following extension of the construction of the previous two subsections.
First, consider $\LG_{n,H,K}'$ as in section \ref{sec:gen cons}, which comes with an action of $A\hat \otimes \GC_n'$.
Now suppose that $H$ coacts on $\LG_{n,H,K}'$ via operations in $A\hat\otimes \GC_n'$, i.e., there is a map of Hopf algebras
$H^*\to \mU (A\otimes \GC_n')$. Then, similar to \eqref{equ:twistcocomp} we may alter the cooperadic cocomposition in $\LG_{n,H,K}'$ by this coaction.
Concretely, if we denote the original cocomposition \eqref{equ:cocomp} by 
\[
\Gamma\mapsto \sum \Gamma' \otimes \Gamma''
\]
and the coaction $\LG_{n,H,K}' \to H\otimes \LG_{n,H,K}'$ on $\Gamma''$ by
\[
\Gamma''\mapsto \sum h\otimes \Gamma'''
\]
then the twisted cocomposition is defined as 
\[
\Gamma\mapsto \sum (\Gamma'\cup_* h) \otimes \Gamma''',
\]
where the operation $-\cup_* h$ multiplies the decoration of the newly formed vertex in the cocomposition by $h\in H$.
It is easy to check that this prescription indeed defines a cooperadic cocomposition.
Furthermore, if $m\in A\hat\otimes \GC_n'$ is a MC element, then we can construct the twisted cooperad $(\LG_{n,H,K}')^m$, which is acted upon by the twisted dg Lie algebra $\alg k:=(A\hat \otimes \GC_n')^m$
Then, if $f:H^*\to \mU \alg k$ is a Hopf algebra map, then we may apply the same construction to obtain a dg Hopf cooperad
\[
((\LG_{n,H,K}')^m, d+m\cdot, \Delta^f).
\]

We will apply the above construction to the case of the previous subsection, i.e., $\hLG_n$), with $m=\delta$, $H=\hat H(G)$, $K=\hat K$ and $A=H(BG)$ to obtain a Hopf cooperad $(\hLG_n, \delta\cdot , \Delta^\phi)$, where  $\phi:\mU \hag \to \mU \GC_n$ is the map encoded by the MC element of Theorem \ref{thm:KW}.
Note that into this operad we have a direct map to the Khoroshkin-Willwacher model from section \ref{sec:KWmodel}
\beq{equ:map1}
\GG_n \circ \hat H(G) \to (\hLG_n, \delta\cdot , \Delta^\phi),
\eeq
obtained simply by mapping a graph to itself, with all internal vertices decorated by the unit $1\in \hat K$.
This map is easily seen to be a quasi-isomorphism since $H(\hat K)=\R$.

\subsection{An extension of the construction II}
Recall the definition of the action of $A\hat \otimes \GC_n'$ on $\LG_{n,H,K}'$ of formula \eqref{equ:actdef}.
This operation may be extended to an action of $K\hat \otimes \GC_n'$ on the dg Hopf collection, using the same formulas.
More specifically we define a right action of $k\otimes \gamma\in K\otimes \GC_n'$ on $\Gamma\in \LG_{n,H,K}'$ by the formula (cf. \eqref{equ:cdotdef})
\[
\Gamma \bullet (k\gamma) =
\sum_{\nu \subset \Gamma}
\pm (\Gamma/_k \nu) \otimes (\tilde \nu,\gamma),
\]
where on the right we sum over all subgraphs with zero or one external vertices, and multiply $k$ into the decoration of the newly formed vertex. (If the vertex is external, one uses the map $K\to H$ of \eqref{equ:KHmap}.)
The operation $-\bullet (k,\gamma)$ respects the commutative algebra structure. However, it does not respect the cooperadic cocomposition. 

Let us now restrict to the special case of most interest to us, $A=H(BG)$, $H=\hat H(G)$ and $K=\hat K=(H(BG)\otimes \hat H(G), d)$ as before.
Also consider our MC element $m\in H(BG)\hat \otimes \GC_n$. Using the dgca inclusion $H(BG)\subset \hat K$ we can just as well consider $m$ as a Maurer-Cartan element of $\hat K \hat \otimes \GC_n$.
It is easy to see that as such the element $m$ is gauge trivial, simply because $H(\hat K)=\R$.
We shall however construct the explicit gauge transformation trivializing $m$.
To this end, consider the identity element
\[
I\in \Hom(\hat H_\bullet(G),\hat H_\bullet (G))\cong \hat H(G)\hat \otimes \hat H_\bullet(G).
\]
The right-hand side can be considered a Hopf algebra over $\hat H(G)$ and the element $I$ is group-like, reflecting the obvious fact that the identity map respects the coproduct.
We then define the primitive element 
\[
x_u := \log_{\hat H(G)} (I) = -\sum_{j\geq 1} \frac 1 j (1-I)^j \in \hat H(G) \otimes \hat{\alg g}\subset \hat K \otimes \hat{\alg g},
\]
where $\hat{\alg g}$ is the resolution of the real homotopy of $\SO(n)$ as in \eqref{equ:hagdef}.
We also consider the Maurer-Cartan element
\[
m_u = \sum_{w} w^*\otimes w \in H(BG) \otimes \hat{\alg g}\subset \hat K \otimes \hat{\alg g},
\]
where $w$ ranges over a basis of the generators $\bar H^\bullet(BG)$, while $w^*$ stand for the corresponding dual basis elements. In other words, $m_u$ represents the inclusion of the generators into $\hat{\alg g}$.
Consider the dg Lie algebra map $\phi:\hat{\alg g}\to \GC_n$ encoded by $m$. Then clearly
\[
\phi(m_u) = m,
\]
where we (abusively) use the notation $\phi$ also for its $H(BG)$-linear extension.
Hence any gauge transformation trivializing $m_u$ can be mapped to one trivializing $m$ via $\phi$.
\begin{lemma}
The element $x_u\in \hat K \otimes \hat{\alg g}$ gauge trivializes the Maurer-Cartan element $m_u$ in the sense that
\beq{equ:mtriv}
m_u = \frac{e^{-\ad_{x_u}}-1}{-\ad_{x_u}} dx,
\eeq
where $\ad_{x_u}(-)=[x_u,-]$ denotes the adjoint action as usual.
\end{lemma}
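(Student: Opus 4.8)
The plan is to recognize the right-hand side of \eqref{equ:mtriv} as the left-logarithmic derivative (Maurer--Cartan form) of the group-like element $I=\exp(x_u)$, and then to compute this logarithmic derivative directly. I would work in the complete associative algebra $\hat K\hotimes \hat H_\bullet(G)$, whose product combines the commutative product of $\hat K$ with that of $\hat H_\bullet(G)=\mU\hag$; here $x_u$ is a degree-$0$ element with $\exp(x_u)=I$ by construction (convergence being guaranteed by completeness, since $1-I$ lies in a filtration-increasing ideal). The standard derivative-of-exponential (Duhamel) identity then gives
\[
I^{-1}\,dI \;=\; e^{-x_u}\,d\bigl(e^{x_u}\bigr) \;=\; \frac{1-e^{-\ad_{x_u}}}{\ad_{x_u}}\,dx_u \;=\; \frac{e^{-\ad_{x_u}}-1}{-\ad_{x_u}}\,dx_u,
\]
where, because $x_u$ is primitive (it lies in $\hat H(G)\otimes\hag$), the operator $\ad_{x_u}$ acts through the Lie bracket of $\hag$ and coincides with the $\ad$ of the statement. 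Thus \eqref{equ:mtriv} is equivalent to the single identity $dI = I\,m_u$, i.e. $I^{-1}dI = m_u$.

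Next I would simplify $dI$. Under the identification $\hat H(G)\hotimes\hat H_\bullet(G)\cong\Hom(\hat H_\bullet(G),\hat H_\bullet(G))$ the element $I$ is the identity endomorphism. Since $\hat H(G)=\mU^c\hag^*$ and $\hat H_\bullet(G)=\mU\hag$ are graded-dual dg Hopf algebras with dual differentials, the internal differentials on the two tensor factors assemble into the convolution differential, which sends $I$ to $d\circ\id-\id\circ d=0$; hence those parts cancel on $I$. What survives in $dI$ is only the Koszul part $d_K$ of the differential of $\hat K$, landing in $\hat H(G)\otimes H(BG)\otimes\hat H_\bullet(G)$, so it suffices to verify $d_K I = I\, m_u$. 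As a consistency check, $I$ is group-like, so its logarithmic derivative is automatically primitive: from $\Delta(I)=I\otimes I$ one gets $\Delta(I^{-1}dI)=I^{-1}dI\otimes 1+1\otimes I^{-1}dI$, so $I^{-1}dI$ lies in $\hat K\otimes\hag$, exactly the space containing $m_u$.

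Finally I would prove $d_KI=I\,m_u$ by unwinding both sides. Writing $I=\sum_\alpha e^\alpha\otimes e_\alpha$ for dual bases of $\hat H(G)=\mU^c\hag^*$ and $\hat H_\bullet(G)=\mU\hag$, the Koszul differential peels the last letter $w_r$ off a word $e^\alpha=w_1\cdots w_r$ and records $sw_r\in H(BG)$, whereas $m_u=\sum_w w^*\otimes w$ appends, on the right, the generator $w\in\hag$ dual to $w^*\in H(BG)$. The duality between $\mU^c\hag^*$ and $\mU\hag$ turns ``peeling the last letter'' into ``right multiplication by a generator,'' which is precisely the operation encoded by $I\,m_u$; this is exactly why the Koszul differential of $\hat K$ was set up with the last-letter convention (and matches the right-invariant form $I^{-1}dI$ rather than the left one). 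The only genuine work is the sign bookkeeping: one must check that the Koszul signs appearing in $d_K$ agree with those hidden in the expansion of $I^{-1}dI$ and with the ordering conventions for $\hag$ and $H(BG)$. This sign matching is the main (and essentially the only) obstacle; the remainder is a direct transcription of the definitions of $d_K$ and $m_u$.
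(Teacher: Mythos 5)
Your proposal is correct and follows essentially the same route as the paper: rewrite \eqref{equ:mtriv} in the enveloping algebra $\hat K\hotimes\hat H_\bullet(G)$ via the derivative-of-exponential identity as $dI = I\,m_u$, and then verify this by observing that the Koszul differential peels the last letter off $w_1^*\cdots w_k^*$, which is exactly right multiplication by $m_u$ under the duality. Your extra observations (that the internal convolution differential vanishes on $I$, and that $I^{-1}dI$ is automatically primitive) are sound refinements of points the paper passes over silently.
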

\begin{proof}
We first note that \eqref{equ:mtriv} can be conveniently rewritten in the universal enveloping algebra $\hat K\hat \otimes \hat H_\bullet(G)$ in the form
\[
m_u = e^{-x_u} d(e^{x_u}).
\]
Note that the ("identitiy") element $I=e^{x_u}$ has the form 
\[
I=\sum_w w^* \otimes w,
\]
where $w$ ranges over a basis of $\hat H_\bullet(G)$, i.e., words in letters from $\bar H_\bullet(BG)[1]$, and $w^*$ are the dual basis elements, which we similarly identify with words in $\bar H(BG)[1]$.
The differential solely acts on $\hat K$, and if we consider a word $w_1^*\cdots w_k^*\in \hat H(G)\subset \hat K$, then 
\[
d (w_1^*\cdots w_k^*) = w_1^*\cdots w_{k-1}^*\otimes w_k^* \in \hat K=\hat(BG)\hat\otimes H(BG).
\]
(In words, the last factor is moved over to $H(BG)$.)
Comparing with the definition of $m_u$ it is thus clear that 
\[
d e^{x_u} = e^{x_u} m_u,
\]
and we are done.
\end{proof}

Applying the map $\phi:\hat{\alg g}\to \GC_n$ from above we hence obtain an element 
\[
x:=\phi(x_u) 
\]
which gauge-trivializes the MC element $m$ in the dg Lie algebra $\hat K\hat\otimes \GC_n$.
This means in particular, that by the action of the latter Lie algebra introduced in the beginning of this subsection we have an isomorphism of dg Hopf collections
\[
e^{x\cdot} : (\LG_{n}, \delta\cdot) \to (\LG_{n}, \delta\cdot + m\cdot).
\]
We note again that this map is not compatible with the cooperadic cocompositions, in a way we shall study next.
To understand the (non-)compatibility, let us suppose that $k$ is an element of $\hat{\alg g}$ and $\gamma\in \GC_n$.
Then one can check that 
\beq{equ:cococompat}
\Delta (\Gamma\bullet (k\gamma))  - \sum (\pm \Gamma'\bullet (k\gamma)\otimes \Gamma'' + \pm \Gamma'\otimes \Gamma''\bullet (k\gamma))
=
  \sum (\pm \Gamma'\cup_* k \otimes \gamma \cdot \Gamma''.
\eeq
In words, the failure to commute with the cocomposition is just the infinitesimal version of the twist of the cocomposition considered in the previous subsection. Next apply the map $\phi$ from above to our element $I$ to obtain the element 
\[
X:=\phi(I)=e^x \in \hat H(G) \hat \otimes \mU\GC_n \subset \hat K \hat \otimes \mU\GC_n
\]
representing the map $\hat H_\bullet(G)=\mU\hat{\alg g}$ integrating our morphism $\hat{\alg g}\to \GC_n$.
Then the non-infinitesimal statement of formula \eqref{equ:cococompat} is then that the action $X\cdot$ intertwines the twisted and untwisted cocompositions, and hence yields an isomorphism of dg Hopf cooperads
\beq{equ:Xdef}
X\cdot : (\LG_{n}, \delta\cdot, \Delta^\phi) \to (\LG_{n}, \delta\cdot + m\cdot, \Delta).
\eeq

\subsection{Proof of Theorem \ref{thm:main}}\label{sec:proof}
To show the theorem it just remains to concatenate the quasi-isomorphisms defined above into the zigzag of quasi-isomorphisms of dg Hopf cooperads
\beq{equ:zigzag}
(\LG_n, d + \delta\cdot + m\cdot,\Delta) 
\rightarrow (\hLG_n, d + \delta\cdot +m\cdot,\Delta)
\xrightarrow{X\cdot} (\hLG_n, d + \delta\cdot,\Delta^\phi)
\xleftarrow{} \GG_n\circ \hat H(G).
\eeq
Here the left-most map is the quasi-isomorphism \eqref{equ:LGhLG}, the middle map is the isomorphism \eqref{equ:Xdef}, and the right-hand map is the quasi-isomorphism \eqref{equ:map1}.
\hfill\qed

\begin{remark}
We refer to the thesis of the first author \cite{ErikMSc} for more explicit and detailed computations of several steps of our proof. 
\end{remark}

\subsection{A slight extension, and the action of the full homotopy biderivation Lie algebra}
The models above exhibit large dg Lie algebras of biderivations, which however do not yet exhaust all homotopy biderivations.
The difference is that we need to extend $\GC_n$ in the above formulas to the extension $\R L\ltimes \GC_n$, where the additional generator $L$ acts on graphs by the loop order grading, i.e., for $\gamma\in \GC_n$ a graph of look order $k$
\[
[L,\gamma] = k \gamma.
\] 
Note that $\R L\ltimes \GC_n$ also acts on $\GG_n'$, where the additional generator $L$ acts on a graph $\Gamma\in \GG_n'$ via multiplication by a scalar
\beq{equ:Ldef}
L\Gamma = (\#\text{edges}-\# \text{internal vertices}) \Gamma.
\eeq
The same formula leads to a biderivation $L$ on the Hopf cooperads $\LG_{n,H,K}'$ of section \ref{sec:gen cons}.

Next consider a slightly altered graph complex $\LLG_{n,H,K}'$, in which all vertices are additionally decorated by one nonnegative integer, which we call the L-number of that vertex.
We extend the cooperadic composition of \eqref{equ:cocomp} so that
\[
\Delta_T \Gamma = \sum_{\gamma\subset \Gamma} \pm \Gamma/\gamma \otimes \gamma.
\]
where in addition the newly formed vertex of $\Gamma/\gamma$ is decorated by the L-number
\[
 \sum (\text{L-numbers decorating vertices of $\gamma$})
+(\#\text{edges of $\gamma$}) - (\#\text{internal vertices of $\gamma$}).
\]
In words, the L-number remembers the total (internal) loop order of the contracted subgraph at that point.

The commutative algebra structure on $\LLG'_{n,H,K}(r)$ is again given by gluing graphs at external vertices, adding up the L-numbers at external vertices.
There is a forgetful map of Hopf cooperads
\[
 \LLG_{n,H,K}'\to \LG_{n,H,K}'
\]
by forgetting the $L$-numbers of all vertices.
It follows in particular that the action $[,]_\circ$ of the Lie algebra $\Graphs_n'(1)$ (see section \ref{sec:gen cons}) extends to $\LLG_{n,H,K}'$.

We may also extend the right action $\bullet$ of \eqref{equ:cdotdef} to our Hopf cooperad $\LLG_{n,H,K}'$.
To this end we merely declare that in the contracted graph $\Gamma/\nu$ on the right-hand side of \eqref{equ:cdotdef} the newly formed vertex carries L-number
\[
 \sum (\text{L-numbers decorating vertices of $\nu$})
+(\#\text{loops of $\nu$}).
\]
Again the L-number formally remembers the loop order of the contracted piece. Note also that the term $\tilde \nu$ on the right-hand side of \eqref{equ:cdotdef} shall now be obtained by forgetting also all L-numbers on vertices.
One can check that this still defines a right $\GC_n'$-action. Furthermore, the compatibility relation \eqref{equ:action comp} continues to hold.
Hence we may again combine both action $[,]_\circ$ and $\bullet$ to one action of $\GC_n'$ on $\LLG_{n,H,K}'$ using formula \eqref{equ:actdef}.

Furthermore, the biderivation $L$ defined in \eqref{equ:Ldef} continues to exist on $\LLG_{n,H,K}'$, with the same definition. (Note that the operator $L$ ignores the L-numbers.)
For $\gamma\in\GC_n'$ of loop order $l$ and $\Gamma\in \LLG_{n,H,K}'$ we readily verify that
\[
 L(\gamma\cdot \Gamma) - \gamma \cdot (L\Gamma) = - l \gamma \cdot \Gamma,
\]
since the operator $\gamma\cdot$ reduces the first Betti number by $l$.

The main advantage of $\LLG_{n,H,K}'$ over $\LG_{n,H,K}'$ is now that for any $a\in \bar A$ we may define an additional biderivation which we shall denote (abusively) $aL$. It is defined on a graph $\Gamma\in \LLG_{n,H,K}'$ defined such that
\[
 (aL) \Gamma = \sum_{v\text{ int. vertex}}L(v) (a\cup_v \Gamma),
\]
where the sum runs over internal vertices $v$, $L(v)\in \mathbb{N}_0$ is the L-number of $v$ and $(a\cup_v \Gamma)$ is the graph $\Gamma$, unaltered except that the decoration (in $K$) at $v$ is decorated by $a$.
A quick computation shows that for $\gamma\in \GC_n'$ of loop order $l$ one has the commutator relation
\[
 (aL)(\gamma\cdot \Gamma) - \gamma \cdot ((aL)\Gamma) = - l (a\gamma) \cdot \Gamma,
\]

Hence the action of $A\hat\otimes\GC_n'$ extends to an action
\[
  A\hat\otimes \left(\R L\ltimes \GC_n' \right) \actson \LLG_{n,H,K}'.
\]
Now we restrict to $H=\hat H(BG)$, $K=\hat K$ and $A=H(BG)$. 
We additionally twist by the Maurer-Cartan element $\delta+m$ as before and to obtain an action 
\[
 (\Der(H(BG))\ltimes H(BG)\otimes \left(\R L\ltimes \GC_n' \right))^{\delta+m} \actson (\LLG_{n,H,K}')^{\delta+m}.
\]
Finally, we restrict to the pieces where all graphs have all internal vertices of valence $\geq 2$, counting a decoration by L-number $>0$ as a valence.
This yields a dg Hopf cooperad acted upon by a dg Lie algebra
\beq{equ:mainaction}
  (\Der(H(BG))\ltimes H(BG)\otimes \left(\R L\ltimes \GC_n \right))^m \actson \LLG_{n}.
\eeq
We have the following result.
\begin{proposition}
 The map $\LLG_{n}\to \LG_n$ by forgetting the L-numbers is a quasi-isomorphism.
\end{proposition}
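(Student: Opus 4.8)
The plan is to exhibit the forgetful map $\pi \colon \LLG_{n} \to \LG_n$ as a surjective morphism of dg Hopf cooperads and to prove it is a quasi-isomorphism by a spectral sequence argument whose upshot is that the L-number decorations carry no cohomological information beyond what the underlying graph already records. First I would note that $\pi$ is a chain map: the graph-combinatorial operations defining the differentials of $\LLG_{n}$ and $\LG_n$ are identical, with $\LLG_{n}$ merely \emph{tracking} L-numbers in addition, and $\pi$ discards exactly this bookkeeping (sending to zero those graphs that become illegal, i.e. acquire an internal vertex of valence $<2$, once the propping-up L-number is forgotten). It then suffices to prove $\pi$ is a quasi-isomorphism in each arity $r$, equivalently that $\ker\pi$ is acyclic.

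The main device is the filtration by the total L-number $N_L=\sum_v L(v)$. I would check that the internal differential $d$ (coming from $\hat K$) and the edge-contraction part $\delta\cdot$ both preserve $N_L$ --- the single edge $\delta$ has loop order $0$, so contracting it merely adds the two endpoint L-numbers --- whereas $m\cdot$ strictly raises $N_L$, since contracting a copy of the loop-order-$\geq 1$ graph $m$ increases the new vertex's L-number by that loop order. Thus $D=D_0+D_+$ with $D_0=d+\delta\cdot$ preserving the filtration and $D_+=m\cdot$ strictly raising it, so the associated graded differential is $D_0$. Convergence is harmless because in fixed arity and cohomological degree only finitely many combinatorial types contribute and $N_L$ is bounded below by $0$.

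On the $E_0/E_1$ pages I would first use $H(\hat K)=\R$ to contract the internal-vertex decorations (a sub-filtration by the number of internal vertices has associated-graded differential essentially $d$, acyclic on decorations since $H(\hat K)=\R$), leaving a graph complex with bare internal vertices still carrying L-numbers together with the contraction differential $\delta\cdot$. The crux is then to show that $\delta\cdot$, which fuses adjacent internal vertices and \emph{adds} their L-numbers, trivializes all cohomology coming from a non-forgettable distribution of L-number, so that the algebraic shadow of forgetting L-numbers --- replacing the per-vertex polynomial algebra $\R[t]$ (with $t^\ell$ the L-number $\ell$) by $\R$ via $t\mapsto 1$ --- is a quasi-isomorphism. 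Concretely I would build a contracting homotopy on $\ker\pi$ out of an operator lowering a single L-number by one while creating/contracting an edge, paired against $\delta\cdot$ in Koszul fashion so that its (anti)commutator with the differential measures the excess L-number, thereby exhibiting $\ker\pi$ as acyclic.

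I expect the construction and verification of this homotopy to be the main obstacle. One must track the signs dictated by the ordering of odd objects, respect the valence constraints (an L-number $>0$ itself counts towards valence, so lowering it can turn a legal vertex illegal and must be compensated by the accompanying edge operation), and confirm that the homotopy contracts all of $\ker\pi$ rather than only its top-$N_L$ part --- equivalently, that the spectral sequence degenerates after the first nontrivial differential. Once acyclicity of $\ker\pi$ is established on the associated graded, the spectral sequence comparison (equivalently the long exact sequence of $0\to\ker\pi\to\LLG_{n}\to\LG_n\to 0$) shows that $\pi$ is a quasi-isomorphism.
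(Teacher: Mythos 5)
Your overall strategy (surjectivity plus acyclicity of the kernel, organized by a filtration and a contracting homotopy) has the right general shape, and is indeed the flavour of argument the paper points to by citing the analogue in Willwacher's graph-complex computations. However, the specific filtration you build the whole argument on is not compatible with the differential, and the argument breaks at that first step. The claim that $d+\delta\cdot$ preserves the total L-number $N_L$ is false. By \eqref{equ:actdef} the operation $\delta\cdot$ is not only edge contraction: it contains the cooperadic term $[\delta_1,-]_\circ$, whose ``first sum'' deletes a univalent internal vertex together with its edge, its (counital) decoration \emph{and its L-number}. In $\GG_n$ and $\LG_n$ this term is invisible because univalent internal vertices are zero there, but in $\LLG_n$ a univalent internal vertex with trivial $\hat K$-decoration and positive L-number is a legal generator, and on such graphs this term strictly \emph{lowers} $N_L$. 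Since $m\cdot$ raises $N_L$ while this term lowers it, no filtration by $N_L$ (increasing or decreasing) is respected by $D$, so there is no spectral sequence whose $E_0$-differential is $d+\delta\cdot$ as you describe.

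This deletion term is not a technicality you can argue away; it is forced by the very fact (which you correctly assert) that $\pi$ is a chain map. Concretely, let $\Gamma'\in\LLG_n(r)$ be the graph with one internal vertex $v$ of L-number $1$ and trivial decoration, joined by a single edge to the external vertex $1$; it is legal since the positive L-number counts towards the valence of $v$, and $\pi(\Gamma')=0$ because forgetting the L-number leaves a univalent internal vertex. The edge-contraction part of $D\Gamma'$ is $\pm\Gamma_1$, where $\Gamma_1$ is the unit graph with L-number $1$ on external vertex $1$, and $\pi(\Gamma_1)$ is the unit of $\LG_n(r)$, which is nonzero. So $\pi(D\Gamma')=0$ can only hold because $D\Gamma'$ contains a second term $\mp\Gamma_0$ (the unit graph with all L-numbers zero), of strictly smaller $N_L$; indeed $D\Gamma'=\pm(\Gamma_1-\Gamma_0)$ is exactly what kills the class $\Gamma_1-\Gamma_0\in\ker\pi$. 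Any correct proof must be organized around this cancellation --- for instance by filtering by the number of internal vertices or by the $H(BG)$-degree of the $\hat K$-decorations rather than by $N_L$ --- and must then actually construct the contracting homotopy on the kernel, which is precisely the step you defer as ``the main obstacle.'' As it stands the proposal rests on a false compatibility claim and omits the central construction, so it does not yet constitute a proof.
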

\begin{proof}
% We call an L-antenna an internal vertex with one incident edge, no $K$-decoration, and (necessarily) positive L-number.
%We equip $\LLG_{n}$ with the filtration by non-L-antenna vertices and consider the corresponding spectral sequence. The first differential contracts L-antennas
%\[
%... pic .
%\]
The proof is similar to \cite[Proposition 3.4]{Willwacher2014} and is left to the reader.
%The essential piece of the differential which allows to remove L-numbers is this 
%\[
 %... pic
%\]
\end{proof}

Together with \eqref{equ:mainaction} and our main Theorem we hence have found a Hopf cooperad model $\LLG_n$ for the framed little $n$-disks operad that is acted upon by a large dg Lie algebra. In fact, it is shown in \cite{BW} that the dg Lie algebra on the left-hand side of \eqref{equ:mainaction} is quasi-isomorphic to the homotopy (bi)derivations of (the cochain model of) that operad.

% cf. documentation
\makeatletter
\providecommand\@dotsep{5}
\makeatother

\printbibliography

\end{document}